\crefname{equation}{}{}
\colorlet{refkey}{orange!20}
\colorlet{labelkey}{blue!60}
\numberwithin{equation}{section}
\newtheorem{theorem}{Theorem}[section]
\newtheorem{proposition}[theorem]{Proposition}
\newtheorem{lemma}[theorem]{Lemma}
\newtheorem{corollary}[theorem]{Corollary}
\theoremstyle{definition}
\newtheorem{definition}[theorem]{Definition}
\newtheorem{open}[theorem]{Open Problem}
\newtheorem{question}[theorem]{Question}
\newtheorem{example}[theorem]{Example}
\theoremstyle{remark}
\newtheorem{remark}[theorem]{Remark}
\newcommand{\abs}[1]{\left\lvert#1\right\rvert}
\newcommand{\abss}[1]{\lvert#1\rvert}
\newcommand{\floor}[1]{\left\lfloor #1 \right\rfloor}
\newcommand{\paren}[1]{\left( #1 \right)}
\newcommand{\ol}{\overline}
\newcommand{\bA}{{\bm{A}}}
\newcommand{\balpha}{{\bm{\alpha}}}
\newcommand{\FF}{\mathbb{F}}
\newcommand{\RR}{\mathbb{R}}
\title{Which graphs can be counted in $C_4$-free graphs?}
\author{David Conlon}
\address{Conlon, Department of Mathematics, California Institute of Technology, Pasadena, CA, USA}
\email{dconlon@caltech.edu}
\thanks{Conlon is supported by NSF Award DMS-2054452.}
\author{Jacob Fox}
\address{Fox, Department of Mathematics, Stanford University, Stanford, CA, USA}
\email{jacobfox@stanford.edu}
\thanks{Fox is supported by a Packard Fellowship and by NSF Award DMS-1855635.}
\author{Benny Sudakov}
\address{Sudakov, Department of Mathematics, ETH, Z\"urich, 8092, Switzerland}
\email{benjamin.sudakov@math.ethz.ch}
\thanks{Sudakov is supported in part by SNSF grant 200021\_196965.}
\author{Yufei Zhao}
\address{Zhao, Department of Mathematics, Massachusetts Institute of Technology, Cambridge, MA, USA}
\email{yufeiz@mit.edu}
\thanks{Zhao is supported by NSF Award DMS-1764176, the MIT Solomon Buchsbaum Fund, and a Sloan Research
Fellowship.}
\begin{document}

\begin{abstract}
For which graphs $F$ is there a sparse $F$-counting lemma in $C_4$-free graphs?
We are interested in identifying graphs $F$ with the property that, roughly speaking, if $G$ is an $n$-vertex $C_4$-free graph with on the order of $n^{3/2}$ edges, then the density of $F$ in $G$, after a suitable normalization, is approximately at least the density of $F$ in an $\epsilon$-regular approximation of $G$. In recent work, motivated by applications in extremal and additive combinatorics, we showed that $C_5$ has this property.
Here we construct a family of graphs with the property.
\end{abstract}

\maketitle

\section{Introduction} \label{sec:intro}

When applying the regularity method in extremal graph theory, proofs can often be divided into two steps: first applying Szemer\'edi's regularity lemma to partition
a large graph so that most pairs of parts are regular and then using a counting (or embedding) lemma to find copies of a particular subgraph in this regular partition. For dense graphs, these steps are generally well-behaved and essentially completely understood. For sparse graphs, however, both steps can break down without additional hypotheses. Here we will focus on the second step of finding appropriate counting lemmas in the sparse regime, since the regularity step is now reasonably well understood~\cite{Koh97,Sco11} (although difficulties in maintaining the so-called no-dense-spots condition can arise even here). 

Similar issues arise in the study of \emph{quasirandom graphs}, a fundamental theme developed and popularized by Chung, Graham and Wilson~\cite{CGW89}, building on earlier work of Thomason~\cite{Tho87}. In their work, they showed, somewhat surprisingly, that several distinct notions of quasirandomness in dense graphs are essentially equivalent. In particular, in an $n$-vertex graph $G$ with edge density $p$, where $p$ is a fixed constant, having $C_4$-density $p^4 + o(1)$ is equivalent to a certain discrepancy condition and this in turn implies that the $F$-density in $G$ is $p^{\abs{E(F)}} + o(1)$ for all fixed graphs $F$. However, as already observed by Chung and Graham in~\cite{CG02}, these equivalences do not automatically carry over to graphs with $o(n^2)$ edges without additional assumptions. Indeed, 
even rather modest variants of the Chung--Graham--Wilson equivalences 
can fail to hold~\cite{SSTZ}. One viewpoint on our work here is that some aspect of the Chung--Graham--Wilson equivalences may be recovered if we assume that our graph is $C_4$-free.

Previous work on developing counting lemmas for sparse graphs has largely focused on controlling relatively dense subgraphs of sparse random or pseudorandom graphs. For instance, a counting lemma in sparse random graphs was proved by Conlon, Gowers, Samotij, and Schacht~\cite{CGSS14} in connection with the celebrated K{\L}R conjecture~\cite{KLR97} (see also~\cite{BMS15, ST15}), 
while a counting lemma in sparse pseudorandom graphs was proved by Conlon, Fox, and Zhao~\cite{CFZ14} 
and later extended to hypergraphs~\cite{CFZ15}, allowing them to simplify the proof of the Green--Tao theorem~\cite{GT08} (see also~\cite{CFZ14a} for a detailed exposition incorporating many further simplifications of the original proof).

In recent work~\cite{CFSZ}, motivated by applications in extremal and additive combinatorics, we pursued the study of sparse regularity in a very different setting, without any explicit pseudorandomness hypothesis. Instead, the only hypothesis on the host graph was that it be $C_4$-free. Under this assumption, we proved a $C_5$-counting lemma, which, when combined with an appropriate sparse regularity lemma, led to various new results, including a $C_5$-removal lemma in $C_4$-free graphs. As an example of an additive combinatorics application, we showed that every Sidon subset of $[N]$ without nontrivial solutions to $w + x + y + z = 4u$ has at most $o(\sqrt{N})$ elements. 
Here a \emph{Sidon set} is a set without nontrivial solutions to the equation $x + y = z + w$ and it is known that the maximum size of a Sidon subset of $[N]$ is $(1+o(1))\sqrt{N}$. We refer the interested reader to \cite{CFSZ} for further discussion of applications.

In this article, we continue the study of counting lemmas in $C_4$-free graphs, our main interest being the problem of determining which graphs $F$, besides $C_5$, satisfy an $F$-counting lemma in $C_4$-free graphs. We will make this question more precise in \cref{def:CL} below, when we say formally what it means for a graph $F$ to be \emph{countable}.

\begin{question}[Main question, informal] \label{que:main}
For which graphs $F$ is there an $F$-counting lemma in $C_4$-free graphs?	
\end{question}

By extending the proof of \cite[Theorem 1.1, see Section 4]{CFSZ} (which was written for $F = C_5$, but easily extends), we can deduce an $F$-removal lemma in $C_4$-free graphs whenever $F$ is countable. 

\begin{corollary}[Sparse removal lemma in $C_4$-free graphs] \label{cor:rem}
For any countable graph $F$ and any $\epsilon > 0$, there exists $\delta = \delta(F, \epsilon) > 0$ such that
every $n$-vertex $C_4$-free graph with at most $\delta n^{\abs{V(F)} - \abs{E(F)}/2}$ copies of $F$ can be made $F$-homomorphism-free by removing at most $\epsilon n^{3/2}$ edges.
\end{corollary}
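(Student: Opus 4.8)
The plan is to run the standard regularity-plus-counting argument, as in the proof of the $C_5$-removal lemma in \cite{CFSZ}, but feeding in the $F$-counting lemma guaranteed by the hypothesis that $F$ is countable instead of the specific $C_5$-counting lemma. Concretely, let $G$ be an $n$-vertex $C_4$-free graph. First I would apply a sparse regularity lemma for $C_4$-free graphs (of the type used in \cite{CFSZ}; this is where $C_4$-freeness is genuinely needed, since it rules out dense spots and lets one normalize by $n^{-1/2}$): this produces a partition $V(G) = V_1 \cup \dots \cup V_k$ with $k = k(\epsilon)$ bounded, such that all but an $\epsilon'$-fraction of pairs $(V_i, V_j)$ are $\epsilon'$-regular at the appropriate scale $p \sim n^{-1/2}$, where $\epsilon'$ is chosen small in terms of $\epsilon$ and $F$.

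Next I would clean the graph: delete all edges inside parts, all edges between irregular pairs, and all edges between pairs whose normalized density is below some threshold $\eta = \eta(F,\epsilon)$. A routine estimate shows the total number of deleted edges is at most (roughly) $\epsilon' k^2 \cdot n^{3/2} + \eta k^2 \cdot n^{3/2} + n^{3/2}/k$, which is at most $\epsilon n^{3/2}$ once $\epsilon'$, $\eta$, and $1/k$ are small enough. Call the resulting graph $G'$. If $G'$ still contains a homomorphic copy of $F$, then that copy lives among regular pairs of density at least $\eta$, and every pair in the "reduced graph" that supports an edge of the homomorphic image is $\epsilon'$-regular with normalized density $\geq \eta$; applying the $F$-counting lemma (which applies precisely because $G \supseteq G'$ is $C_4$-free and $F$ is countable) to this configuration yields at least $c(F,\eta,\epsilon') \, n^{\abs{V(F)} - \abs{E(F)}/2}$ copies of $F$ in $G'$, hence in $G$. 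Choosing $\delta = \delta(F,\epsilon)$ smaller than this constant $c$ gives a contradiction with the hypothesis that $G$ has at most $\delta n^{\abs{V(F)} - \abs{E(F)}/2}$ copies of $F$. Therefore $G'$ is $F$-homomorphism-free, and it was obtained from $G$ by deleting at most $\epsilon n^{3/2}$ edges, as required.

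The one point that needs care — and the main obstacle — is matching up the hypotheses: the $F$-counting lemma built into the definition of \emph{countable} will say something like "for every $\epsilon>0$ there is $\delta>0$ such that if the normalized densities across the regular pairs are at least $\delta$ then the normalized $F$-count is at least (density product) $- \epsilon$," with quantifiers in a specific order, and one must run the regularity lemma with parameters chosen in the correct dependent order ($\epsilon \to \eta \to \epsilon' \to k$) so that the counting lemma's regularity requirement is met on the cleaned graph. This is exactly the bookkeeping carried out in \cite[Section 4]{CFSZ} for $C_5$; since nothing there used a special property of $C_5$ beyond the existence of its counting lemma, the same argument goes through verbatim for any countable $F$, which is why we state this as a corollary rather than reproving it in detail.
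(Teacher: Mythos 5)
Your proposal takes a genuinely different route from the paper's, and as written it has a real gap. The paper's proof (sketched in the paragraph following the corollary, with details in \cite[Section 4]{CFSZ}) applies a sparse \emph{weak} (Frieze--Kannan-type) regularity lemma to produce a single weighted graph $H \in [0,1]^{V\times V}$ satisfying the global discrepancy condition \cref{eq:disc}; countability then converts ``few copies of $F$ in $G$'' into ``$\hom(F,H)/n^{\abs{V(F)}}$ is small''; the \emph{dense} removal lemma is applied to $H$ as a black box; and the removed entries of $H$ are pulled back to edges of $G$. There is no cleaning step and no supersaturation step. This choice is forced by the form of \cref{def:CL}: the counting lemma that countability provides is a statement about a graph $G$ and a global approximant $H$ satisfying \cref{eq:disc} for \emph{all} $A,B \subseteq V$, not a statement about a tuple of $\epsilon'$-regular pairs of parts. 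So your plan to ``apply the $F$-counting lemma to this configuration'' of dense regular pairs does not match the hypotheses of the only counting lemma available; to use it you would first have to repackage your Szemer\'edi-type partition as a global $H$ satisfying \cref{eq:disc}, which runs into the same obstruction described next and essentially returns you to the weak-regularity route.

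The concrete gap is in your cleaning step. In the sparse setting, the claim that the edges between irregular pairs can be discarded at a cost of roughly $\epsilon' n^{3/2}$ is not routine, and is false without an additional ``no dense spots'' (boundedness) hypothesis: a single irregular pair $(V_i,V_j)$ with $\abss{V_i}=\abss{V_j}=n/k$ can carry as many as $\Theta\paren{(n/k)^{3/2}} = \Theta\paren{k^{-3/2} n^{3/2}}$ edges by K\H{o}v\'ari--S\'os--Tur\'an, so the up to $\epsilon' k^2$ irregular pairs can carry $\Theta\paren{\epsilon' k^{1/2} n^{3/2}}$ edges in total; since $k$ depends on $\epsilon'$ (with tower-type growth), this is not small. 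The introduction flags exactly this issue (``difficulties in maintaining the so-called no-dense-spots condition can arise even here''), and the concluding remarks point out that the analogous statement for regular partitions of $C_{2k}$-free graphs is open for $k \ge 3$; for $C_4$-free graphs it can be handled, but only by the more careful arguments of \cite{CFSZ}, not by the estimate you call routine. A secondary point worth noting: your final step converts ``many homomorphisms'' into ``many copies,'' but in this sparse regime degenerate homomorphisms need not be lower order (see \cref{ex:subdiv}, where $\hom(K_5',G) \gtrsim n^6$ while the normalizing scale is $n^5$); this is precisely why \cref{def:CL} carries the restriction sets $\bA$, which must be used (e.g., with disjoint $A_v$) to isolate injective homomorphisms.
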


Here ``copies of $F$'' refer to subgraphs isomorphic to $F$, whereas ``$F$-homomorphism-free'' means that there is no graph homomorphism from $F$ into the resulting graph after edge removal. 
In particular, if $F$ is bipartite and the number of copies of $F$ in a $C_4$-free graph on $n$ vertices is $o(n^{\abs{V(F)} - \abs{E(F)}/2})$, then $G$ has $o(n^{3/2})$ edges. 

Let us sketch the main ideas of the proof of \cref{cor:rem}, referring the reader to \cite[Section 4]{CFSZ} for further details.
We first apply a sparse weak regularity lemma to approximate the $C_4$-free graph $G$ by some ``dense'' graph $H$ (allowing edge-weights in $[0,1]$ for $H$).
The counting lemma then implies that $H$ has small $F$-homomorphism density.
By the dense $F$-removal lemma, applied as a black box, one can therefore remove a collection of edges from $H$ with small total weight so that the remaining graph contains no subgraphs to which $F$ is homomorphic.
Removing the corresponding edges from $G$ then makes it $F$-homomorphism-free.

The notion of having an $F$-counting lemma is made precise in the following definition. 
Note that the conclusion we seek is one-sided, that is, we only ask for a lower bound. In practice, this is usually all that is needed in applications. 

\begin{definition} \label{def:CL}
	A graph $F$ is \emph{countable} if, for every $\epsilon > 0$, there exists  $\delta = \delta(F, \epsilon) > 0$ such that if $G$ is an $n$-vertex $C_4$-free graph on vertex set $V$ and $H \in [0,1]^{V \times V}$ is a symmetric matrix (i.e., an edge-weighted graph) satisfying
	\begin{equation}\label{eq:disc}
	\abs{\frac{e_G(A,B)}{n^{3/2}} - \frac{e_H(A,B)}{n^2}} \le \delta \qquad \text{for all } A,B\subseteq V,
	\end{equation}
	(here $e_G(A,B) = \{(x,y) \in A\times B: xy \in E(G)\}$ and $e_H(A,B) = \sum_{x \in A, y \in B} H(x,y)$),
	then, for every $\bA = (A_v)_{v \in V(F)}$ with $A_v \subseteq V$ for each $v \in V(F)$, one has
	\begin{equation} \label{eq:lower-count}	
	\frac{\hom_\bA(F, G)}{n^{\abs{V(F)} - \abs{E(F)}/2}}
	\ge	\frac{\hom_\bA(F, H)}{n^{\abs{V(F)}}} - \epsilon,
	\end{equation}
	where $\hom_\bA(F,G)$ is the number of homomorphisms from $F$ to $G$ where each $v \in V(F)$ is mapped to a vertex in $A_v$ and $\hom_\bA(F,H)$ is the weighted analogue defined by the formula
	\[
	\hom_\bA(F,H) := \sum_{x_v \in A_v \ \forall v \in V(F)} \prod_{uv \in E(F)} H(x_u,x_v).
	\]
\end{definition}

The scaling in the denominators of the definition above is natural because the maximum number of edges in an $n$-vertex $C_4$-free graph is $(1/2 + o(1))n^{3/2}$ (see \cref{rem:polarity} below). It may be instructive to consider what happens when $G$ is the random graph $G(n,n^{-1/2})$ and $H$ is the all-$1$ matrix, in which case, provided $|E(F')| < 2|V(F')|$ for all subgraphs $F'$ of $F$,  \cref{eq:disc,eq:lower-count} with $\delta,\epsilon \to 0$ hold with high probability as $n \to \infty$.

\begin{remark}
In \cref{def:CL}, it suffices to only consider unweighted graphs $H$, since we can always randomly sample a weighted graph to get an unweighted graph with similar density properties.
However, in applications, $H$ is usually the normalized edge-density matrix of some (weak) regular partition of $G$, so it is more intuitive to allow edge-weights for $H$.
\end{remark}

\begin{remark} \label{rem:polarity} The \emph{polarity graph}~\cite{Bro66, ER62,ERS66} is an $n$-vertex $C_4$-free graph $G$ with $(1/2 + o(1))n^{3/2}$ edges (which is essentially best possible  by the K\H{o}v\'ari--S\'{o}s--Tur\'{a}n theorem~\cite{KST54}). In addition, it has the property that every edge lies in exactly one triangle and it satisfies the discrepancy condition~\cref{eq:disc} with $\delta = O(n^{-1/4})$ and $H$ being the all-$1$ matrix.

More specifically, let $q$ be a prime power and let $G_0$ be the graph with $q^2 + q + 1$ vertices, each corresponding to a point of the projective plane over $\FF_q$, i.e., elements of $\FF_q^3 \setminus \{(0,0,0)\}$ where $(x,y,z)$ is identified with $(\lambda x, \lambda y, \lambda y)$ for every nonzero $\lambda \in \FF_q$, with an edge between $(x,y,z)$ and $(x',y',z')$ if and only if $xx' + yy'+ zz' = 0$. This graph has exactly $q+1$ loops. It is also $(q+1)$-regular and has the property that each pair of distinct vertices has exactly one common neighbor, 
which in particular implies that $G_0$ is $C_4$-free. The square of its adjacency matrix is thus $q I + J$ (with $J$ being the all-$1$ matrix) and, hence, all of its eigenvalues, besides the top eigenvalue $q+1$, are $\pm \sqrt{q}$. 
The discrepancy claim in the previous paragraph then follows from the expander mixing lemma (see, e.g., \cite{KS:06}). In practice, we will actually use the induced subgraph $G$ of this graph where we remove all vertices with loops. This inherits the discrepancy property from $G_0$, but has the additional property mentioned above that every edge is contained in a unique triangle (see \cite{LV03} for a more detailed discussion of this point). 
\end{remark}

We now use the polarity graph to deduce a simple necessary condition for $F$ to be countable. 

\begin{remark} \label{rem:nec}
If $F$ is countable, then it has girth at least $5$. 

Indeed, suppose that $F$ contains a $4$-cycle $v_1 v_2 v_3 v_4$.
Let $G$ be an $n$-vertex polarity graph and $H$ the all-$1$ matrix.
The discrepancy property \cref{eq:disc} is satisfied for $\delta = o(1)$ by the previous remark. 
Set $A_{v_1}$, $A_{v_2}$, $A_{v_3}$, $A_{v_4}$ to be disjoint vertex sets of $V(G)$, each of order $\floor{n/4}$,
and $A_v = V(G)$ for all $v \in V(F)\setminus\{v_1, v_2,v_3,v_4\}$. 
Then $\hom_\bA(F, G) = 0$ since $G$ is $C_4$-free, 
but $\hom_\bA(F,H)\gtrsim n^{\abs{V(F)}}$,  
so $F$ is not countable.

Now suppose that $F$ contains a triangle.
Consider the graph $G'$ obtained from the polarity graph $G$ by deleting one edge from each triangle of $G$ chosen uniformly and independently at random (recall that $G$ is a disjoint union of triangles). With probability $1-o(1)$, the discrepancy property \cref{eq:disc} remains valid with $\delta = o(1)$ and $H$ the all-$2/3$ matrix.
However, \cref{eq:lower-count} fails when $A_v = V(G)$ for all $v$, since the fact that $G'$ is triangle-free implies that $\hom(F, G') = 0$. So again $F$ is not countable. (The same construction also appears in \cite[Lemma 2.6]{ABKS03}.)
\end{remark}

In the next section, we describe our main result, which gives a sufficient condition for countability, presented as a recursive construction.

\section{Countable graphs}

We begin with a simple proposition, whose proof may be found in \cref{sec:proof}. 

\begin{proposition} \label{prop:pendant-edge-countable}
	Adding a pendant edge to a countable graph produces a countable graph.
\end{proposition}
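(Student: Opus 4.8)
The plan is to deduce the countability of $F' := F + uw$ (the graph obtained from the countable graph $F$ by attaching a new pendant vertex $w$ to a vertex $u \in V(F)$) from the countability of $F$, by ``contracting'' the pendant vertex into a vertex weighting of $A_u$. Fix $\epsilon > 0$; we will choose a large constant $K = K(\epsilon)$, set $T := Kn^{1/2}$, choose a small $\epsilon' = \epsilon'(\epsilon) > 0$, and take $\delta := \min\{\delta_F(\epsilon'),\, \epsilon/3\}$, where $\delta_F(\cdot)$ is the constant from the countability of $F$. Let $G, H$ satisfy \eqref{eq:disc} with this $\delta$ and fix $\bA = (A_v)_{v \in V(F')}$; write $d_G^{A_w}(z) := \abs{N_G(z) \cap A_w}$ and $d_H^{A_w}(z) := \sum_{x \in A_w} H(z,x)$ for $z \in V$. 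Splitting off the pendant vertex,
\[
\hom_\bA(F',G) = \sum_{\phi_0} d_G^{A_w}(\phi_0(u)),
\qquad
\hom_\bA(F',H) = \sum_{\psi} w_H(\psi)\, d_H^{A_w}(\psi(u)),
\]
where $\phi_0, \psi$ range over maps $V(F) \to V$ with $\phi_0(v), \psi(v) \in A_v$ and $w_H(\psi) := \prod_{ab \in E(F)} H(\psi(a),\psi(b))$. Form a random set $R \subseteq A_u$ by placing each $z \in A_u$ into $R$ independently with probability $\min\{d_G^{A_w}(z), T\}/T \in [0,1]$, and let $\bA'$ agree with $\bA$ on $V(F) \setminus \{u\}$ with $A'_u = R$. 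Since \eqref{eq:disc} holds with $\delta \le \delta_F(\epsilon')$ and $G$ is $C_4$-free, the countability of $F$ applies to $(G,H,\bA')$ for every realization of $R$; averaging the resulting inequalities over $R$ gives
\[
\frac{\EE\hom_{\bA'}(F,G)}{n^{\abs{V(F)}-\abs{E(F)}/2}} \;\ge\; \frac{\EE\hom_{\bA'}(F,H)}{n^{\abs{V(F)}}} - \epsilon'.
\]

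On the $G$-side, linearity of expectation gives $\EE\hom_{\bA'}(F,G) = \sum_{\phi_0} \min\{d_G^{A_w}(\phi_0(u)), T\}/T \le T^{-1}\hom_\bA(F',G)$, so $\hom_\bA(F',G) \ge T\,\EE\hom_{\bA'}(F,G)$. Substituting this and keeping track of exponents via $\abs{V(F')} - \abs{E(F')}/2 = \abs{V(F)} - \abs{E(F)}/2 + \tfrac12$, $\abs{V(F')} = \abs{V(F)} + 1$, and $T/n^{1/2} = K$, one obtains after rearrangement
\[
\frac{\hom_\bA(F',G)}{n^{\abs{V(F')}-\abs{E(F')}/2}} \;\ge\; \frac{K\,\EE\hom_{\bA'}(F,H)}{n^{\abs{V(F)}}} - K\epsilon'.
\]
Now $K\,\EE\hom_{\bA'}(F,H) = n^{-1/2}\sum_\psi w_H(\psi)\min\{d_G^{A_w}(\psi(u)), T\}$ while $n^{-1}\hom_\bA(F',H) = n^{-1}\sum_\psi w_H(\psi)\, d_H^{A_w}(\psi(u))$, so the proposition reduces to
\[
\sum_\psi w_H(\psi)\,\Delta(\psi(u)) \;\ge\; -(\epsilon - K\epsilon')\,n^{\abs{V(F)}},
\qquad
\Delta(z) := \frac{\min\{d_G^{A_w}(z), T\}}{n^{1/2}} - \frac{d_H^{A_w}(z)}{n}.
\]

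To prove this, factor $w_H(\psi) = c(\vec y)\,h_{\vec y}(z)$, where $z = \psi(u)$, $\vec y = (\psi(v))_{v \ne u}$, $c(\vec y) = \prod_{ab \in E(F),\, u \notin \{a,b\}} H(y_a,y_b) \ge 0$, and $h_{\vec y}(z) = \prod_{v \sim_F u} H(z, y_v) \in [0,1]$. Since $\sum_{\vec y} c(\vec y) \le n^{\abs{V(F)}-1}$, it suffices to show, for every $h \colon A_u \to [0,1]$,
\[
\sum_{z \in A_u} h(z)\,\Delta(z) \;\ge\; -\Bigl(\delta + \tfrac{2}{K}\Bigr) n,
\]
which (together with the constraint $\delta + 2/K + K\epsilon' \le \epsilon$, satisfied by e.g.\ $K = 6/\epsilon$, $\epsilon' = \epsilon^2/18$) finishes the proof. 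Writing $\min\{d, T\} = d - (d-T)^+$, the part $\sum_z h(z)\bigl(n^{-1/2}d_G^{A_w}(z) - n^{-1}d_H^{A_w}(z)\bigr)$ has absolute value at most $\delta n$: apply \eqref{eq:disc} to each pair of sets $\bigl(\{z \in A_u : h(z) > t\},\, A_w\bigr)$ and integrate over $t \in [0,1]$. The leftover part is $-n^{-1/2}\sum_z h(z)(d_G^{A_w}(z) - T)^+ \ge -n^{-1/2}\sum_{z \in V}(d_G(z) - T)^+$, and here the $C_4$-freeness of $G$ enters: since any two vertices of $G$ have at most one common neighbour, $\sum_{z \in V} d_G(z)^2 \le n(n-1) + 2e(G) \le 2n^2$, so by Cauchy--Schwarz $\sum_{z \in V}(d_G(z) - T)^+ \le 2n^2/T$ and the leftover part is at least $-2n^{3/2}/T = -2n/K$. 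This establishes the single-variable estimate.

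The crux — and the reason the naive approach of writing $\hom_\bA(F',G) = \sum_{x \in A_w} \hom_{\bA_x}(F,G)$, with $A_u$ replaced by $N_G(x) \cap A_u$, and summing the $F$-counting lemma over $x$, fails — is that restricting $\phi_0(u)$ to $N_G(x)$ shrinks $A_u$ to size $\Theta(n^{1/2})$, far below the scale at which the $F$-counting lemma carries information, so the $\Theta(\abs{A_w}) = \Theta(n)$ copies of the additive error $\epsilon'$ swamp the main term. Replacing the pendant vertex by the degree weighting truncated at scale $T = \Theta(n^{1/2})$ keeps $R$ at a usable density, while the truncation discards only the contribution of the atypically high-degree vertices of $G$, which is negligible precisely because $G$ is $C_4$-free. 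The rest — the exponent bookkeeping and the weighted form of \eqref{eq:disc} — is routine.
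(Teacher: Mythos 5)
Your proof is correct, and its core mechanism is the same as the paper's: contract the pendant vertex into a $[0,1]$-valued weighting of its neighbour (the normalized degree into $A_w$), apply the countability of $F$ with that weighting, and then use the discrepancy condition \cref{eq:disc} to replace the $G$-degree by the $H$-degree. Where you diverge is in the supporting infrastructure. The paper first recasts countability in terms of arbitrary $[0,1]$-valued vertex weight functions (justified by multilinearity in \cref{sec:notation}) and, separately, trims all vertices of degree exceeding $2\sqrt{n}$ using only the discrepancy condition, which forces there to be at most $\delta n$ such vertices; after trimming and rescaling, the degree weight automatically lies in $[0,1]$ and the pendant-edge proof becomes four lines. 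You instead stay with the set formulation, realize the weight at the single vertex $u$ by randomized rounding into a random set $R$ (which is the multilinearity argument in probabilistic clothing), and handle high-degree vertices by truncating at $T=K\sqrt{n}$, bounding the discarded contribution via the $C_4$-free second-moment estimate $\sum_z d_G(z)^2\le 2n^2$ --- this last step genuinely differs from the paper, whose trimming lemma never invokes $C_4$-freeness. (Minor quibble: your bound $\sum_z (d_G(z)-T)^+ \le T^{-1}\sum_z d_G(z)^2$ is a Markov-type tail estimate, not Cauchy--Schwarz.) Your version is self-contained, which is a virtue here; the paper's weight-function formalism is what scales to \cref{thm:islands-bridges}, where weights must be absorbed at many vertices simultaneously, so its extra setup pays for itself later.
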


In particular, we have the following important corollary.

\begin{corollary}
	All trees are countable.
\end{corollary}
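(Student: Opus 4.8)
Write $F'$ for the graph obtained from the countable graph $F$ by attaching a new vertex $w$ to a vertex $u\in V(F)$ via the edge $uw$, and set $k=\abs{V(F)}$, $\ell=\abs{E(F)}$, so that $\abs{V(F')}-\abs{E(F')}/2 = k+\tfrac12-\tfrac{\ell}{2}$ and $\abs{V(F')}=k+1$. Given $\epsilon>0$, the plan is to take $\mu=\epsilon'=\epsilon/3$ and $\delta=\min\{\delta(F,\epsilon'),\,\mu\epsilon/3\}$, where $\delta(F,\cdot)$ is supplied by the countability of $F$. Fix a $C_4$-free $n$-vertex graph $G$ on $V$, a symmetric $H\in[0,1]^{V\times V}$ satisfying \cref{eq:disc} with this $\delta$, and a family $\bA=(A_v)_{v\in V(F')}$. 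For $x\in V$ abbreviate $d_G(x):=\abs{N_G(x)\cap A_w}$ and $d_H(x):=\sum_{y\in A_w}H(x,y)$, noting $0\le d_H(x)\le\abs{A_w}\le n$. The starting point is the identity $\hom_\bA(F',G)=\sum_{\phi}d_G(\phi(u))$, the sum running over homomorphisms $\phi$ from $F$ to $G$ with $\phi(v)\in A_v$ for every $v\in V(F)$, together with the analogous formula for $H$, namely $\hom_\bA(F',H)=\sum_{x\in A_u}m_H(x)d_H(x)$, where $m_H(x)$ denotes $\hom_\bA(F,H)$ with the $u$-coordinate pinned to $x$.

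The first step is a layer-cake decomposition. Writing $S_t:=\{x\in V:d_G(x)\ge t\}$ and $\bA_S$ for the family obtained from $\bA$ by replacing $A_u$ with $A_u\cap S$, one has
\[
\hom_\bA(F',G)=\int_0^\infty\hom_{\bA_{S_t}}(F,G)\,dt\ \ge\ \int_0^{\sqrt n}\hom_{\bA_{S_t}}(F,G)\,dt .
\]
Each $\bA_{S_t}$ is a legitimate input to \cref{def:CL} for $F$ (with the same $G$, $H$, and $\delta\le\delta(F,\epsilon')$), so applying \cref{eq:lower-count} for $F$ under the threshold integral and evaluating $\int_0^{\sqrt n}\hom_{\bA_{S_t}}(F,H)\,dt=\sum_{x\in A_u}m_H(x)\min(d_G(x),\sqrt n)$ yields
\[
\hom_\bA(F',G)\ \ge\ n^{-\ell/2}\sum_{x\in A_u}m_H(x)\min\!\bigl(d_G(x),\sqrt n\bigr)\ -\ \epsilon'\,n^{k+\frac12-\frac{\ell}{2}} .
\]
The cutoff $\sqrt n$ is chosen precisely so that this error term is $O(\epsilon')$ after the final normalization rather than blowing up.

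The second step converts $d_G$ into $n^{-1/2}d_H$ using \cref{eq:disc}. I would introduce the bad set $P:=\{x\in A_u:\ d_G(x)<n^{-1/2}d_H(x)-\mu\sqrt n\}$ and note that summing its defining inequality over $P$ and comparing with \cref{eq:disc} applied to $(P,A_w)$ forces $\mu\sqrt n\,\abs P<\delta n^{3/2}$, hence $\abs P\le(\delta/\mu)n$; consequently $\hom_{\bA_P}(F,H)\le(\delta/\mu)n^{k}$ (using $H\le1$), and since $d_H\le n$ the contribution of $P$ to $\hom_\bA(F',H)$ is at most $(\delta/\mu)n^{k+1}$. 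For $x\in A_u\setminus P$ one has $d_G(x)\ge n^{-1/2}d_H(x)-\mu\sqrt n$, and because $n^{-1/2}d_H(x)-\mu\sqrt n\le(1-\mu)\sqrt n<\sqrt n$ the truncation is harmless, that is, $\min(d_G(x),\sqrt n)\ge n^{-1/2}d_H(x)-\mu\sqrt n$. Summing against $m_H(x)\ge0$, discarding the nonnegative $P$-terms, and using $\sum_{x\in A_u}m_H(x)d_H(x)=\hom_\bA(F',H)$ and the crude bound $\sum_{x\in A_u}m_H(x)=\hom_\bA(F,H)\le n^k$, I obtain
\[
\sum_{x\in A_u}m_H(x)\min(d_G(x),\sqrt n)\ \ge\ n^{-1/2}\hom_\bA(F',H)-\bigl(\tfrac{\delta}{\mu}+\mu\bigr)n^{k+\frac12}.
\]
Feeding this into the bound from the first step and dividing by $n^{\abs{V(F')}-\abs{E(F')}/2}=n^{k+\frac12-\frac{\ell}{2}}$ gives \cref{eq:lower-count} for $F'$ with total error $\tfrac{\delta}{\mu}+\mu+\epsilon'\le\epsilon$ by the choice of $\mu,\epsilon',\delta$, which is what is required.

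The step I expect to be the crux is the second one: countability of $F$ is only a one-sided inequality, so there is no pointwise domination of $d_G(x)$ by $n^{-1/2}d_H(x)$, only the averaged control coming from \cref{eq:disc}. The device that makes the one-sided information suffice is to isolate the small bad set $P$ on which $d_G$ falls well below its expected value, bound $\abs P$ by a single application of the discrepancy condition, and then absorb the $P$-contribution entirely on the $H$-side, where the trivial bounds $H\le1$ and $d_H\le n$ are enough. A secondary point to get right is the bookkeeping of exponents of $n$ caused by the extra vertex and edge of $F'$, together with the choice of $\sqrt n$ as the integration cutoff, which is forced by requiring that neither the truncation loss (controlled for $x\notin P$) nor the error $\epsilon'\sqrt n\cdot n^{k-\ell/2}$ dominate.
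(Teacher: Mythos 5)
Your argument is correct. What you have written is in substance a proof of \cref{prop:pendant-edge-countable} (adding a pendant edge preserves countability); combined with the trivial observation that a single vertex is countable and induction on the number of edges, this gives the corollary, which is exactly how the paper deduces it. I verified the bookkeeping: the error from truncating the layer-cake integral at $\sqrt n$ is $\epsilon' n^{k+1/2-\ell/2}$, the bad set $P$ has size at most $(\delta/\mu)n$ by one application of \cref{eq:disc} to $(P,A_w)$, and the three error terms sum to $\delta/\mu+\mu+\epsilon'\le\epsilon$ with your choices.

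Where you differ from the paper is in the machinery used to absorb the pendant edge. The paper first reformulates \cref{def:CL} in terms of $[0,1]$-valued vertex weight functions (via multilinearity) and proves a separate trimming lemma allowing one to assume $G$ has maximum degree at most $2\sqrt n$; the pendant edge $uw$ is then absorbed by replacing the weight at $u$ with $\alpha_u(x)\int g(x,y)\alpha_w(y)\,dy$, which lies in $[0,1]$ precisely because of the max-degree reduction (\cref{eq:g-max-degree}), after which one application of countability of the smaller graph and one application of the discrepancy condition finish the proof. Your layer-cake decomposition with cutoff $\sqrt n$ is the ``unrolled'' version of the same idea: $\min(d_G(x),\sqrt n)/\sqrt n$ is exactly the $[0,1]$-valued weight the paper would use, the layer cake expresses it as an average of set indicators so that the set-indexed counting lemma for $F$ can be applied directly, and the truncation at $\sqrt n$ substitutes for the degree trimming. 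Your explicit bad-set argument replaces the paper's single clean application of the (one-sided) discrepancy inequality in the weighted formulation. The paper's route is shorter once the weight-function framework and trimming lemma are in place (and that framework is reused heavily in the proof of \cref{thm:islands-bridges}); yours is self-contained within the unweighted \cref{def:CL} and needs neither auxiliary reduction, at the cost of the extra $P$-analysis.
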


It will be shown in the next section that it suffices to verify countability within $n$-vertex $C_4$-free graphs $G$ with maximum degree at most $2\sqrt{n}$. This makes the following definition relevant.
 
\begin{definition} \label{def:tame}
	A graph $F$ is \emph{tame} if there exists a constant $C = C(F)$ such that $\hom(F, G) \le Cn^{\abs{V(F)}-\abs{E(F)}/2}$ for every $n$-vertex $C_4$-free graph $G$ with maximum degree at most $2\sqrt{n}$.
\end{definition}

An edgeless graph is clearly tame. 
Here is a sufficient recursive condition for tameness.

\begin{proposition} \label{prop:tame}
    Let $F$ be a tame graph. Let $F'$ be obtained from $F$ by either
	\begin{enumerate}[(a)]
	\item adding a pendant edge to $F$ (creating a single new leaf vertex) or
	\item joining two (not necessarily distinct) vertices of $F$ by a $3$-edge path whose two intermediate vertices are new. (If the two vertices of $F$ are the same, then the path is a triangle.)
	\end{enumerate}
	Then $F'$ is tame.
\end{proposition}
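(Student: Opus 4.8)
The plan is to treat constructions (a) and (b) separately, in each case writing a homomorphism from $F'$ to $G$ as a homomorphism $\phi\colon F\to G$ extended by a choice of images for the newly added vertices, and then bounding the number of extensions using the $C_4$-freeness of $G$ and the degree bound $\Delta(G)\le 2\sqrt n$. A preliminary computation shows that in both cases $\abs{V(F')}-\abs{E(F')}/2=\bigl(\abs{V(F)}-\abs{E(F)}/2\bigr)+\tfrac12$, so the goal is to gain exactly one factor of $\sqrt n$ over the tameness bound $\hom(F,G)\le Cn^{\abs{V(F)}-\abs{E(F)}/2}$ for $F$.

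For (a), let $u\in V(F)$ be the vertex carrying the pendant edge. Every homomorphism $F'\to G$ restricts to some $\phi\colon F\to G$ and maps the new leaf to a neighbor of $\phi(u)$, so $\hom(F',G)=\sum_{\phi\colon F\to G}\deg_G(\phi(u))\le 2\sqrt n\cdot\hom(F,G)$. Combined with tameness of $F$, this yields $\hom(F',G)\le 2Cn^{\abs{V(F')}-\abs{E(F')}/2}$, so $F'$ is tame with $C(F')=2C(F)$.

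For (b), let $a,b\in V(F)$ be the two vertices joined by the new path on vertices $a,x,y,b$ (possibly $a=b$), and let $A$ be the adjacency matrix of $G$. A homomorphism $F'\to G$ amounts to a homomorphism $\phi\colon F\to G$ together with a walk of length $3$ in $G$ from $\phi(a)$ to $\phi(b)$, so $\hom(F',G)=\sum_{\phi\colon F\to G}(A^3)_{\phi(a),\phi(b)}$. The crucial estimate is the uniform bound $(A^3)_{v,w}\le 4\sqrt n$ for all $v,w\in V(G)$: writing $(A^3)_{v,w}=\sum_q(A^2)_{v,q}A_{q,w}$, the term $q=v$ contributes $(A^2)_{v,v}A_{v,w}\le\deg_G(v)\le 2\sqrt n$, while for $q\ne v$ the $C_4$-freeness of $G$ forces $(A^2)_{v,q}\le 1$ (distinct vertices have at most one common neighbor), so the remaining terms sum to at most $\deg_G(w)\le 2\sqrt n$. (In the degenerate case $a=b$ this just counts closed walks of length $3$ from $\phi(a)$, i.e.\ twice the number of triangles through $\phi(a)$, which is at most $\deg_G(\phi(a))$ since in a $C_4$-free graph every neighborhood induces a subgraph of maximum degree $\le 1$.) Hence $\hom(F',G)\le 4\sqrt n\cdot\hom(F,G)\le 4Cn^{\abs{V(F')}-\abs{E(F')}/2}$, so $F'$ is tame with $C(F')=4C(F)$.

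I do not expect a serious obstacle. After rewriting the homomorphism counts as sums over homomorphisms of $F$, both cases reduce to the elementary facts that an $n$-vertex $C_4$-free graph of maximum degree $O(\sqrt n)$ satisfies $\max_v\deg_G(v)=O(\sqrt n)$ and $\max_{v,w}(A^3)_{v,w}=O(\sqrt n)$. The only mild subtlety is the diagonal/degenerate case — the $q=v$ term in the expansion of $(A^3)_{v,w}$, and the triangle case $a=b$ — where ``two distinct vertices have at most one common neighbor'' does not apply and one must instead use the degree bound directly (or the matching structure of neighborhoods in a $C_4$-free graph); one should also confirm that the constant $C(F')$ depends only on $C(F)$ and not on $n$.
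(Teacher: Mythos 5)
Your proposal is correct and follows essentially the same route as the paper: case (a) is immediate from the degree bound, and case (b) rests on the uniform bound of $4\sqrt n$ on the number of $3$-edge walks between any (not necessarily distinct) pair of vertices, which you prove by the same $C_4$-freeness argument (splitting off the one degenerate term where the "at most one common neighbor" property does not apply and bounding it by the maximum degree instead). The exponent bookkeeping and the handling of the $a=b$ triangle case are also as in the paper.
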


\begin{proof}
	Let $G$ be an $n$-vertex $C_4$-free graph with maximum degree at most $2\sqrt{n}$. It suffices to show that $\hom(F', G) \le 4 \sqrt{n} \hom(F, G)$. In case (a),  this is clear, since $G$ has maximum degree at most $2\sqrt{n}$. In case (b), we verify that the number of $3$-edge walks  between any pair of vertices (not necessarily distinct) in $G$ is at most $4\sqrt{n}$. Indeed, given $x,y \in V(G)$, let $w$ be a neighbor of $x$. If $w \ne y$ (at most $2\sqrt{n}$ such $w$), then, since $G$ is $C_4$-free, there is at most one 2-edge walk from $w$ to $y$. On the other hand, if $w = y$ (at most one such $w$), the number of $2$-edge walks from $w=y$ back to itself is $\deg(y) \le 2\sqrt{n}$.
\end{proof}

\begin{example} All cycles are tame, since, for each $\ell \ge 3$, one can first build an $(\ell-3)$-edge path using (a) and then complete it to an $\ell$-cycle using (b).
\end{example}

\tikzstyle{v}=[circle, fill=black, inner sep = 1pt]
\tikzstyle{p}=[fill=red, regular polygon, regular polygon sides=3, inner sep = 1.5pt]
\tikzstyle{i}=[draw=blue!50!black,fill=blue!25,dashed]

\begin{example}\label{ex:tame-sequence} The graphs in the sequence depicted below are also tame. To see this, observe that, at each step, we add a new path with $\ell \ge 3$ edges whose intermediate vertices are new (by again applying step (a) $\ell-3$ times and then applying step (b) once).

\[
\begin{tikzpicture}[scale=.25,baseline]
		\node[v] (r)   at ( 2, 0) {};
		\node[v] (u)   at ( 0, 1) {};
		\node[v] (l)   at (-2, 0) {};
		\node[v] (d)   at ( 0,-1) {};
		\draw (r)--(u)--(l)--(d)--(r);
\end{tikzpicture}
\qquad 
\begin{tikzpicture}[scale=.25,baseline]
		\node[v] (r)   at ( 2, 0) {};
		\node[v] (u)   at ( 0, 1) {};
		\node[v] (l)   at (-2, 0) {};
		\node[v] (d)   at ( 0,-1) {};
		\node[v] (uu)  at ( 0, 3) {};
		\node[v] (ruu) at ( 4, 3) {};
		\node[v] (ru)  at ( 4, 1) {};
		\draw (r)--(u)--(l)--(d)--(r);
		\draw (r)--(ru)--(ruu)--(uu)--(u);
\end{tikzpicture}
\qquad
\begin{tikzpicture}[scale=.25,baseline]
		\node[v] (r)   at ( 2, 0) {};
		\node[v] (u)   at ( 0, 1) {};
		\node[v] (l)   at (-2, 0) {};
		\node[v] (d)   at ( 0,-1) {};
		\node[v] (uu)  at ( 0, 3) {};
		\node[v] (ruu) at ( 4, 3) {};
		\node[v] (ru)  at ( 4, 1) {};
		\node[v] (luu) at (-4, 3) {};
		\node[v] (lu)  at (-4, 1) {};
		\draw (r)--(u)--(l)--(d)--(r);
		\draw (r)--(ru)--(ruu)--(uu)--(u);
		\draw (l)--(lu)--(luu)--(uu);
\end{tikzpicture}
\qquad
\begin{tikzpicture}[scale=.25,baseline]
		\node[v] (r)   at ( 2, 0) {};
		\node[v] (u)   at ( 0, 1) {};
		\node[v] (l)   at (-2, 0) {};
		\node[v] (d)   at ( 0,-1) {};
		\node[v] (uu)  at ( 0, 3) {};
		\node[v] (ruu) at ( 4, 3) {};
		\node[v] (ru)  at ( 4, 1) {};
		\node[v] (luu) at (-4, 3) {};
		\node[v] (lu)  at (-4, 1) {};
		\node[v] (dd)  at ( 0,-3) {};
		\node[v] (rdd) at ( 4,-3) {};
		\node[v] (rd)  at ( 4,-1) {};
		\draw (r)--(u)--(l)--(d)--(r);
		\draw (r)--(ru)--(ruu)--(uu)--(u);
		\draw (l)--(lu)--(luu)--(uu);
		\draw (r)--(rd)--(rdd)--(dd)--(d);
\end{tikzpicture}
\qquad
\begin{tikzpicture}[scale=.25,baseline]
		\node[v] (r)   at ( 2, 0) {};
		\node[v] (u)   at ( 0, 1) {};
		\node[v] (l)   at (-2, 0) {};
		\node[v] (d)   at ( 0,-1) {};
		\node[v] (uu)  at ( 0, 3) {};
		\node[v] (ruu) at ( 4, 3) {};
		\node[v] (ru)  at ( 4, 1) {};
		\node[v] (luu) at (-4, 3) {};
		\node[v] (lu)  at (-4, 1) {};
		\node[v] (dd)  at ( 0,-3) {};
		\node[v] (rdd) at ( 4,-3) {};
		\node[v] (rd)  at ( 4,-1) {};
		\node[v] (ldd) at (-4,-3) {};
		\node[v] (ld)  at (-4,-1) {};
		\draw (r)--(u)--(l)--(d)--(r);
		\draw (r)--(ru)--(ruu)--(uu)--(u);
		\draw (l)--(lu)--(luu)--(uu);
		\draw (r)--(rd)--(rdd)--(dd)--(d);
		\draw (l)--(ld)--(ldd)--(dd);
\end{tikzpicture}
\]
\end{example}

\begin{example} \label{ex:K23-not-tame}
	$K_{2,3}$ is not tame. Indeed, the $n$-vertex polarity graph $G$ has $\hom(K_{2,3}, G) \ge \hom(K_{1,3}, G) = \sum_{x \in V(G)} \deg_G(x)^3 \gtrsim n^{5/2}$, which is much larger than the $Cn^2$ upper bound required for tameness.
\end{example}

\begin{example} \label{ex:subdiv}
	Let $K'_k$ denote the $1$-subdivision of $K_k$. Then $K'_k$ is tame if and only if $k \le 4$. Indeed, let $G$ be the $n$-vertex polarity graph.
	Then, since there is a homomorphism $K'_k \to K_{1,\binom{k}{2}}$ mapping all $k$ vertices of the original $K_k$ to the same vertex,
	we have that 
	$$\hom(K'_k, G) \ge \hom(K_{1, \binom{k}{2}},G) \gtrsim n^{1 + \binom{k}{2}/2}.$$
	But $1 + \binom{k}{2}/2 > k=|V(K'_k)|-|E(K'_k)|/2$ for $k \ge 5$, so $K'_k$ is not tame.
	On the other hand, for $k \le 3$, $K'_k$ is tame due to \cref{prop:tame}, while, despite the fact that \cref{prop:tame} does not apply to $K'_4$, it is still tame, as may be verified by performing a case check based on which subsets of the original four vertices of $K_4$ are mapped to the same vertex.
	
	It will follow from our results below that every $K'_k$ is countable. Therefore, $K'_5$ (or $K'_k$ for any $k \ge 5$) is an example of a non-tame countable graph. 
	Moreover, since, for $H$ the all-$1$ matrix, the polarity graph $G$  satisfies the discrepancy property \cref{eq:disc} with $\delta = o(1)$, we see that $K'_5$ does not satisfy an ``upper-bound counting lemma'', i.e., \cref{eq:lower-count} with $\ge \dots -\epsilon$ replaced by $\le\dots + \epsilon$.
	That is, the $K'_5$-counting lemma in $C_4$-free graphs is truly one-sided.
\end{example}

We now describe an important building block in our recursive construction of countable graphs.

\begin{definition}
	Let $F$ be a graph and $I \subseteq V(F)$ an independent set.
	We say that $F$ is a \emph{connector with ends $I$} (or simply that $(F,I)$ is a connector) if
	\begin{enumerate}
		\item [(a)] $F$ is countable and
		\item [(b)] the graph $F \vee_I F$ formed by gluing two copies of $F$ along $I$ is tame.
	\end{enumerate} 
\end{definition}

Here is the simplest interesting connector.

\begin{example}
	The 2-edge path $v_0v_1v_2$ is a connector with ends $\{v_0, v_2\}$. This is illustrated below, where the ends of the connector are marked by red triangles.
	\[
	F = 
	\begin{tikzpicture}[scale=.3,baseline={(0,-.1)}]
		\draw (-2,0) node[p] {} -- (0,1) node[v] {} -- (2,0) node[p] {};
	\end{tikzpicture}
	\qquad \qquad 
	F \vee_I F = 	
	\begin{tikzpicture}[scale=.3,baseline={(0,-.1)}]
		\draw (-2,0) node[v] {} 
		-- (0,1) node[v] {} 
		-- (2,0) node[v] {} 
		-- (0,-1) node[v] {} 
		--cycle;
	\end{tikzpicture}
	\]
	More generally, any path is a connector with ends being any independent set. 	
	However, the same statement does not extend to all trees. For instance, $K_{1,3}$ does not give rise to a triple-ended connector,  since $K_{2,3}$ is not tame by \cref{ex:K23-not-tame}. 
\end{example}

Our main result is the following recursive construction of countable graphs. It can be visualized in terms of ``islands'' and ``bridges.'' We start with several disjoint tame countable components (the islands) and join them using connectors (the bridges). The theorem then says that the resulting graph is countable.

\begin{theorem} \label{thm:islands-bridges}
Let $F$ be a graph that is an edge-disjoint union of its subgraphs $F_1, \dots, F_k, J_1, \dots, J_\ell$, satisfying all of the following conditions:
\begin{enumerate}[(a)]
	\item $F_1, \dots, F_k$ are countable and vertex-disjoint;
	\item $F_1, \dots, F_{k-1}$ are tame ($F_k$ may be tame or not);
	\item for each $j \in [\ell]$, $J_j$ is a connector with ends $I_j = V(J_j) \cap V(F_1 \cup \cdots \cup F_k)$ and $I_j$ has at most one vertex in common with each $F_i$;
	\item each pair of connectors $J_i$ and $J_j$ share at most one vertex and the vertex they share (if any) lies in $I_i \cap I_j$.
\end{enumerate}
Then $F$ is countable.
\end{theorem}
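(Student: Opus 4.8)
The plan is to prove \cref{thm:islands-bridges} by induction on the number $\ell$ of connectors, reducing the general statement to the two building-block operations captured by \cref{prop:pendant-edge-countable} (pendant edges) and the yet-unused analogue for connectors. The base case $\ell = 0$ is immediate: then $F$ is a disjoint union $F_1 \sqcup \cdots \sqcup F_k$ of countable graphs, and countability is easily seen to be preserved under disjoint unions (the homomorphism count and the $H$-count both factor over components, and the scaling exponent $\abs{V(F)} - \abs{E(F)}/2$ is additive over components, so \cref{eq:lower-count} for the union follows by multiplying the per-component inequalities, using that each factor $\hom_\bA(F_i,H)/n^{\abs{V(F_i)}}$ lies in $[0,1]$). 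So the real content is the inductive step: showing that attaching one connector $J = J_\ell$ to an already-countable graph $F^- := F_1 \cup \cdots \cup F_k \cup J_1 \cup \cdots \cup J_{\ell-1}$ along its ends $I_\ell$ yields a countable graph. This is where conditions (b), (c), (d) are used: (b) ensures that after removing $F_k$ (the one island allowed to be non-tame) the remaining part of $F^-$ is tame, which matters because of the reduction (mentioned in the excerpt just before \cref{def:tame}) to host graphs $G$ of maximum degree at most $2\sqrt{n}$; (c) says each end of $J$ meets each island in at most one vertex, so gluing $J \vee_{I_\ell} J$ stays within the ``tame'' regime by the connector definition; and (d) keeps the connectors from interacting badly with one another.

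Concretely, the heart of the argument should be a lemma of the form: \emph{if $F^-$ is countable and tame, $(J,I)$ is a connector, and $I = V(J) \cap V(F^-)$ meets $F^-$ in a controlled way, then $F := F^- \cup J$ is countable.} To prove this, fix $\epsilon$, a $C_4$-free host $G$ with the discrepancy property \cref{eq:disc}, edge-weighted $H$, and sets $\bA = (A_v)$. Split $V(F) = V(F^-) \sqcup (V(J) \setminus I)$. Sum over the homomorphic images $\bm{x} = (x_v)_{v \in V(F^-)}$ of the $F^-$-part first; for each fixed $\bm{x}$ the remaining sum over the $J$-internal vertices is itself a homomorphism count of the connector $J$ into $G$ with prescribed images on its ends $I$. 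The strategy is a Cauchy--Schwarz (or power-mean) step in $\bm{x}$: one wants to lower-bound $\sum_{\bm x} (\text{weight of } \bm x \text{ in } G) \cdot N_G(J; \bm x)$ by relating it, via Cauchy--Schwarz against $\sum_{\bm x} N_G(J;\bm x)^2 = \hom(J \vee_I J, G)$ with appropriate boundary sets, to the corresponding quantity; tameness of $J \vee_I J$ (connector axiom (b)) gives the upper bound $\hom(J\vee_I J,G) \lesssim n^{\abs{V(J\vee_I J)} - \abs{E(J\vee_I J)}/2}$ that controls the denominator, and countability of $J$ (axiom (a)) and of $F^-$ feeds the lower bound on the cross term. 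The normalization bookkeeping has to come out to exactly the exponent $\abs{V(F)} - \abs{E(F)}/2$, which works because in a connector gluing the shared vertices $I$ are independent (no edges double-counted) and the exponent is ``sub-additive in the right direction'' under gluing along an independent set.

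The main obstacle I anticipate is precisely this Cauchy--Schwarz bookkeeping: one must choose the right quantity to square, make sure the ``defect'' terms (coming from the $-\epsilon$ slack in the countability hypotheses for $F^-$ and for $J$, and from the $\delta$-slack in \cref{eq:disc}) are genuinely lower-order, and handle the case analysis in condition (c) where an end vertex of $J$ coincides with a vertex of an island (so that the $A_v$ for that vertex is shared, rather than free) versus the case where $J$ is a triangle attached at a single vertex of $F^-$. A secondary technical point is justifying the reduction to bounded-degree host graphs $G$ (max degree $\le 2\sqrt n$): vertices of degree much larger than $\sqrt n$ are few and, in a $C_4$-free graph, contribute negligibly to the relevant homomorphism counts, but this needs to be spelled out and is exactly why the tameness hypotheses (b) are phrased for bounded-degree graphs. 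Finally, one should double-check that the inductive step is set up so that $F^- = F_1 \cup \cdots \cup F_k \cup J_1 \cup \cdots \cup J_{\ell-1}$ is itself tame except possibly for the single non-tame island $F_k$ — here \cref{prop:tame} (applied repeatedly, since each connector is built from paths and triangles in the sense of its case (b)) shows the connectors and the tame islands assemble into a tame graph, so the non-tameness is quarantined in $F_k$ and the bounded-degree reduction still applies.
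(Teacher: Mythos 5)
Your proposal assembles the right ingredients (countability of the pieces, the second moment $\hom(J \vee_I J, G)$ controlled by tameness of the doubled connector, the additivity of the exponent under gluing along an independent set), but the core mechanism is missing, and the inductive frame you choose does not fit the hypotheses. First, the structural issue: your induction on $\ell$ peels off one connector $J_\ell$ and asks that the remainder $F^-$ be \emph{countable and tame}. But the theorem explicitly allows $F_k$ to be non-tame, so $F^-$ need not be tame; and even when all islands are tame, tameness of the assembled $F^-$ is neither assumed nor proved anywhere (your appeal to \cref{prop:tame} presumes connectors are built from paths and triangles, which is not part of the definition of a connector). Relatedly, you cannot feed the connector contribution into the countability of $F^-$ as a black box: the prescribed-ends count $g^\balpha_{J,I}(x_I)$ depends \emph{jointly} on all coordinates of $x_I$, which lie in different islands, whereas \cref{def:CL} (even in its weight-function reformulation) only tolerates extra factors that are products of single-vertex weights with values in $[0,1]$. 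This is exactly why conditions (c) and (d) insist that each connector meet each island, and each other connector, in at most one vertex, and why the paper's proof never treats $F^-$ as a single countable unit.

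Second, the step you flag as ``the main obstacle'' -- the Cauchy--Schwarz bookkeeping -- is where the actual idea lives, and it is not a Cauchy--Schwarz lower bound. The paper's proof is a telescoping (hybrid) argument on the integrand $\prod_i g_{F_i}\prod_j g^\balpha_{J_j,I_j}$: one first \emph{truncates} each connector counting function at a threshold $\eta_j^{-1}$, bounding the loss by Markov's inequality against the second moment $\int (g^\balpha_{J,I})^2 \le t(J\vee_I J, g) \le 1$ (this, and only this, is where tameness of $J \vee_I J$ enters). The truncated factors are bounded, hence after rescaling they are legitimate $[0,1]$-valued single-vertex weights from the viewpoint of any one island (each island sees each connector through at most one vertex), so the islands can be swapped from $g$ to $h$ one at a time using their countability; the connectors are then swapped one at a time in decreasing order of $j$, using countability of $J_j$ plus the truncation estimate, with a rapidly decreasing hierarchy $\eta_j = \epsilon^{2^j}$ chosen so that the error incurred at step $j$, after being multiplied by the bounds $\eta_1^{-1}\cdots\eta_{j-1}^{-1}$ on the remaining truncated factors, stays $O(\epsilon)$. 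The ordering (the possibly non-tame island $F_k$ first, then the tame islands, then the connectors) and the reformulation of countability in terms of $[0,1]$-valued vertex weight functions are both essential; without them the absorption steps are not available. As written, your proposal identifies the inputs but leaves the mechanism that combines them unproved.
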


\begin{example}
	The 5-cycle is countable. The ``islands and bridges'' decomposition is illustrated below, where each contiguous shaded region is an island.
	Both connectors are 2-edge-paths.
	\[
	\begin{tikzpicture}[scale=.3,baseline]
		\fill[i] (-2.8,2.2) rectangle (2.8,3.8);
		\fill[i] (-0.8,-0.8) rectangle (0.8,0.8);
		\draw ( 0, 0) node[v] {} 
		   -- (-2, 1) node[v] {}
		   -- (-2, 3) node[v] {}
		   -- ( 2, 3) node[v] {}
		   -- ( 2, 1) node[v] {}
		   -- cycle;
	\end{tikzpicture}
	\]
	Similarly, $\ell$-cycles, for $\ell \ge 5$, can be shown to be countable by starting with two islands, one an isolated vertex, as above, and the other a path of length $\ell - 4$, with $2$-edge-path connectors joining the  endpoints of this path to the isolated vertex. As mentioned in~\cite[Footnotes 1 and 3]{CFSZ}, knowing that longer cycles can be counted allows us to extend our results~\cite[Section 1.3]{CFSZ} about finding solutions of  translation-invariant equations in Sidon sets to equations with more than five variables.
\end{example}

\begin{example}
	Since the 5-cycle is both countable and tame, we can use it as an island to build up further countable graphs.
	For example, connecting a pair of 5-cycles using 2-edge-path connectors, as shown below, yields a new countable graph.
	\[
 	 \begin{tikzpicture}[scale=.4,baseline]
 	 	\foreach \i in {0,1,2,3,4}{
	 		\coordinate (A\i) at (90+72*\i:1.4);
	 		\coordinate (D\i) at (54+72*\i:3.4) {};
	 		\coordinate (E\i) at (54+72*\i:2.6) {};
	 	}
	 	
		\fill[i,even odd rule] 
		(A0)--(A1)--(A2)--(A3)--(A4)--cycle
		(D0)--(D1)--(D2)--(D3)--(D4)--cycle
 	 	(E0)--(E1)--(E2)--(E3)--(E4)--cycle;

	 	\foreach \i in {0,1,2,3,4}{
	 		\node[v] (a\i) at (90+72*\i:1) {};
	 		\node[v] (b\i) at (70+72*\i:2) {};
	 		\node[v] (c\i) at (110+72*\i:2) {};
	 		\node[v] (d\i) at (54+72*\i:3) {};
	 	}
	 	\foreach \i[evaluate=\i as \j using {int(Mod(\i+1,5))}] in {0,1,2,3,4}{
	 		\draw (a\i) -- (a\j);
	 		\draw (d\i) -- (d\j);
	 		\draw (a\i) -- (b\i)--(d\i);
	 		\draw (a\i)--(c\i)--(d\j);
	 	}
	 \end{tikzpicture}
	\]
\end{example}
	
\begin{example}
	Using that the $5$-cycle is countable and tame, we see that the following graph is also countable, again with the islands shaded:
	\[
	\begin{tikzpicture}[scale=.3,baseline]
		\fill[i] (-2.8,-0.8) rectangle (2.8,3.8);
		\fill[i] (5.2,0.2) rectangle (6.8,1.8);
		\draw ( 0, 0) node[v] {} 
		   -- (-2, 1) node[v] {}
		   -- (-2, 3) node[v] {}
		   -- ( 2, 3) node[v] (a1) {}
		   -- ( 2, 1) node[v] (b1) {}
		   -- cycle;
		\draw (a1) -- ++(4,0) node[v] (a2) {}
				   -- ++(0,-2) node[v] (b2) {}
				   -- ++(-2,-1) node[v] {}
				   -- (b1);
	\end{tikzpicture}
	\]
	This graph is also tame by \cref{prop:tame}, so we can repeat the  process to show that the following graph (and any longer chain of $5$-cycles) is tame and countable. 
	\[
	\begin{tikzpicture}[scale=.3,baseline]
	    \fill[i] (-2.8,-0.8) rectangle (6.8,3.8);
		\fill[i] (9.2,0.2) rectangle (10.8,1.8);
		\draw ( 0, 0) node[v] {} 
		   -- (-2, 1) node[v] {}
		   -- (-2, 3) node[v] {}
		   -- ( 2, 3) node[v] (a1) {}
		   -- ( 2, 1) node[v] (b1) {}
		   -- cycle;
		\draw (a1) -- ++(4,0) node[v] (a2) {}
				   -- ++(0,-2) node[v] (b2) {}
				   -- ++(-2,-1) node[v] {}
				   -- (b1);
   		\draw (a2) -- ++(4,0) node[v] {}
				   -- ++(0,-2) node[v] {}
				   -- ++(-2,-1) node[v] {}
				   -- (b2);
	\end{tikzpicture}
	\]
\end{example}

\begin{example}
	The following graph is a connector (with the ends again marked by red triangles):
	\begin{equation}\label{eq:c5pair-connector}
	\begin{tikzpicture}[scale=.3,baseline]
		\draw ( 0, 0) node[p] {} 
		   -- (-2, 1) node[v] {}
		   -- (-2, 3) node[v] {}
		   -- ( 2, 3) node[v] (a1) {}
		   -- ( 2, 1) node[v] (b1) {}
		   -- cycle;
		\draw (a1) -- ++(4,0) node[v] (a2) {}
				   -- ++(0,-2) node[v] (b2) {}
				   -- ++(-2,-1) node[p] {}
				   -- (b1);
	\end{tikzpicture}
	\end{equation}
	Indeed, we saw in the last example that this graph is countable, while the graph formed by gluing two copies along the ends, as shown below, is tame by \cref{ex:tame-sequence}.
	\[
	\begin{tikzpicture}[scale=.25,baseline]
		\node[v] (r)   at ( 2, 0) {};
		\node[v] (u)   at ( 0, 1) {};
		\node[v] (l)   at (-2, 0) {};
		\node[v] (d)   at ( 0,-1) {};
		\node[v] (uu)  at ( 0, 3) {};
		\node[v] (ruu) at ( 4, 3) {};
		\node[v] (ru)  at ( 4, 1) {};
		\node[v] (luu) at (-4, 3) {};
		\node[v] (lu)  at (-4, 1) {};
		\node[v] (dd)  at ( 0,-3) {};
		\node[v] (rdd) at ( 4,-3) {};
		\node[v] (rd)  at ( 4,-1) {};
		\node[v] (ldd) at (-4,-3) {};
		\node[v] (ld)  at (-4,-1) {};
		\draw (r)--(u)--(l)--(d)--(r);
		\draw (r)--(ru)--(ruu)--(uu)--(u);
		\draw (l)--(lu)--(luu)--(uu);
		\draw (r)--(rd)--(rdd)--(dd)--(d);
		\draw (l)--(ld)--(ldd)--(dd);
	\end{tikzpicture}
	\]
	Similarly, we can check that the following graph (and any longer chain of $5$-cycles) is a multi-ended connector:
	\[
	\begin{tikzpicture}[scale=.3,baseline]
		\draw ( 0, 0) node[p] {} 
		   -- (-2, 1) node[v] {}
		   -- (-2, 3) node[v] {}
		   -- ( 2, 3) node[v] (a1) {}
		   -- ( 2, 1) node[v] (b1) {}
		   -- cycle;
		\draw (a1) -- ++(4,0) node[v] (a2) {}
				   -- ++(0,-2) node[v] (b2) {}
				   -- ++(-2,-1) node[p] {}
				   -- (b1);
   		\draw (a2) -- ++(4,0) node[v] {}
				   -- ++(0,-2) node[v] {}
				   -- ++(-2,-1) node[p] {}
				   -- (b2);
	\end{tikzpicture}
	\]
\end{example}

\begin{example}
	The following graph is countable (one of the connectors is a 2-edge-path, while the other is \cref{eq:c5pair-connector}):
	\[
	\begin{tikzpicture}[scale=.3,baseline]
		\fill[i] (-2.8,2.2) rectangle (2.8,3.8);
		\fill[i] (-0.8,-0.8) rectangle (0.8,0.8);
		\draw ( 0, 0) node[v] (w1) {} 
		   -- (-2, 1) node[v] {}
		   -- (-2, 3) node[v] {}
		   -- ( 2, 3) node[v] (a1) {}
		   -- ( 2, 1) node[v] (b1) {}
		   -- cycle;
		\draw  (a1) -- ++(4,0) node[v]  {}
				    -- ++(0,-2) node[v]  {}
				    -- ++(-2,-1) node[v] (w2) {}
				    -- (b1);
		\draw  (w1) -- ++(0,-2) node[v] {}
					-- ++(4,0) node[v] {}
					-- (w2);
	\end{tikzpicture}
	\]
	We can extend this example further. Since the above graph is countable, we can use \cref{prop:tame} to verify that, with the ends as marked, it is also a connector:
	\[
	\begin{tikzpicture}[scale=.3,baseline]
		\draw ( 0, 0) node[v] (w1) {} 
		   -- (-2, 1) node[v] {}
		   -- (-2, 3) node[v] {}
		   -- ( 2, 3) node[v] (a1) {}
		   -- ( 2, 1) node[v] (b1) {}
		   -- cycle;
		\draw  (a1) -- ++(4,0) node[v]  {}
				    -- ++(0,-2) node[p]  {}
				    -- ++(-2,-1) node[v] (w2) {}
				    -- (b1);
		\draw  (w1) -- ++(0,-2) node[v] {}
					-- ++(4,0) node[p] {}
					-- (w2);
	\end{tikzpicture}
	\]
	Using this connector, we deduce that the following graph is countable:
	\[
	\begin{tikzpicture}[scale=.3,baseline]
		\fill[i] (3.2,-1.2) rectangle (8.8,-2.8);
		\fill[i] (5.2,0.2) rectangle (6.8,1.8);
		\draw ( 0, 0) node[v] (w1) {} 
		   -- (-2, 1) node[v] {}
		   -- (-2, 3) node[v] {}
		   -- ( 2, 3) node[v] (a1) {}
		   -- ( 2, 1) node[v] (b1) {}
		   -- cycle;
		\draw  (a1) -- ++(4,0) node[v]  {}
				    -- ++(0,-2) node[v]  (q){}
				    -- ++(-2,-1) node[v] (w2) {}
				    -- (b1);
		\draw  (w1) -- ++(0,-2) node[v] {}
					-- ++(4,0) node[v] (p) {}
					-- (w2);
		\draw (p) -- ++(4,0) node[v] {}
		          -- ++(0,2) node[v] {}
		          -- (q);
	\end{tikzpicture}
	\]
	Similar inductive arguments allow us to prove the countability of many other graphs of girth at least $5$. However, 
	as we shall explain in more detail in the concluding remarks, we are far from a classification. For instance, our methods seem insufficient for showing that $3$-regular graphs such as those below are countable.
\end{example}

\begin{open} \label{open:dod-pet}
	 Are the dodecahedral and Petersen graphs, shown below, countable? 
	 \[
	 \begin{tikzpicture}[scale=.2,baseline]
	 	\foreach \i in {0,1,2,3,4}{
	 		\node[v] (a\i) at (-90+72*\i:1.5) {};
	 		\node[v] (b\i) at (-90+72*\i:3) {};
	 		\node[v] (c\i) at (-54+72*\i:4.3) {};
	 		\node[v] (d\i) at (-54+72*\i:5.5) {};
	 	}
	 	\foreach \i[evaluate=\i as \j using {int(Mod(\i-1,5))}] in {0,1,2,3,4}{
	 		\draw (a\i) -- (a\j);
	 		\draw (a\i) -- (b\i);
	 		\draw (b\i) -- (c\i);
	 		\draw (b\i) -- (c\j);
	 		\draw (c\i) -- (d\i);
	 		\draw (d\i) -- (d\j);
	 	}
	 \end{tikzpicture}
	 \qquad 
 	 \begin{tikzpicture}[scale=.55,baseline]
	 	\foreach \i in {0,1,2,3,4}{
	 		\node[v] (a\i) at (90+72*\i:1) {};
	 		\node[v] (b\i) at (90+72*\i:2) {};
	 	}
	 	\foreach \i[evaluate=\i as \ia using {int(Mod(\i+2,5))},evaluate=\i as \ib using {int(Mod(\i+1,5))}] in {0,1,2,3,4}{
	 		\draw (a\i) -- (b\i);
	 		\draw (a\i) -- (a\ia);
	 		\draw (b\i) -- (b\ib);
	 	}
	 \end{tikzpicture}
	 \]
\end{open}

In the remainder of the article, we prove \cref{prop:pendant-edge-countable,thm:islands-bridges}.

\section{Trimming high-degree vertices}

In this brief section, we show that in the definition of countability, \cref{def:CL}, we can restrict to considering $n$-vertex $C_4$-free graphs $G$ satisfying an additional maximum degree assumption, namely, that $G$ has maximum degree at most $2\sqrt{n}$, without affecting the family of graphs which are countable.

\begin{lemma}
	Let $G$ be a graph on a vertex set $V$ of size $n$ and
	let $H \in [0,1]^{V \times V}$ be a symmetric matrix such that
	\begin{equation}
		\label{eq:trim-disc}
		\abs{\frac{e_G(A,B)}{n^{3/2}} - \frac{e_H(A,B)}{n^2}} \le \delta \qquad \text{for all } A,B \subseteq V.
	\end{equation}
	Let $S = \{v \in V : \deg_G(v) \le 2\sqrt{n}\}$ and
	let $G'$ be the subgraph of $G$ with the same vertex set $V$ but only keeping edges with both endpoints in $S$.
	Then
	\[
		\abs{\frac{e_{G'}(A,B)}{n^{3/2}} - \frac{e_H(A,B)}{n^2}} \le 3\delta \qquad \text{for all } A,B \subseteq V.
	\]
\end{lemma}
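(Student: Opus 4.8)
The plan is to compare the three quantities $e_{G'}(A,B)$, $e_G(A,B)$, and $e_H(A,B)$ for fixed $A,B\subseteq V$, doing the error bookkeeping on the weighted graph $H$ rather than on $G$, which is what makes the constant come out to exactly $3\delta$. Write $T := V\setminus S=\{v\in V:\deg_G(v)>2\sqrt n\}$. The first step is to bound $\abs T$: applying \cref{eq:trim-disc} with $A=T$ and $B=V$, and using that $H$ has entries in $[0,1]$, gives $e_G(T,V)\le n^{-1/2}e_H(T,V)+\delta n^{3/2}\le\abs T\sqrt n+\delta n^{3/2}$, whereas $e_G(T,V)=\sum_{v\in T}\deg_G(v)>2\sqrt n\,\abs T$ whenever $T\ne\emptyset$. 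Comparing the two gives $\sqrt n\,\abs T<\delta n^{3/2}$, so $\abs T\le\delta n$ (and this also holds trivially when $T=\emptyset$).

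The upper bound in the conclusion will be immediate from $G'\subseteq G$: since $e_{G'}(A,B)\le e_G(A,B)$, \cref{eq:trim-disc} yields $\frac{e_{G'}(A,B)}{n^{3/2}}-\frac{e_H(A,B)}{n^2}\le\frac{e_G(A,B)}{n^{3/2}}-\frac{e_H(A,B)}{n^2}\le\delta\le3\delta$.

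The lower bound is the part requiring care. The key observation is that $G'$ keeps precisely the edges of $G$ with both endpoints in $S$, so $e_{G'}(A,B)=e_G(A\cap S,B\cap S)$; applying \cref{eq:trim-disc} to the pair $(A\cap S,B\cap S)$ then gives $\frac{e_{G'}(A,B)}{n^{3/2}}\ge\frac{e_H(A\cap S,B\cap S)}{n^2}-\delta$. It remains to compare $e_H(A\cap S,B\cap S)$ with $e_H(A,B)$, and here I would decompose $A=(A\cap S)\sqcup(A\cap T)$ and $B=(B\cap S)\sqcup(B\cap T)$ to obtain $e_H(A,B)-e_H(A\cap S,B\cap S)=e_H(A\cap T,B)+e_H(A\cap S,B\cap T)\le\abs T\cdot n+n\cdot\abs T\le2\delta n^2$, using $\abs T\le\delta n$ together with $H\le1$ entrywise. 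This gives $\frac{e_H(A\cap S,B\cap S)}{n^2}\ge\frac{e_H(A,B)}{n^2}-2\delta$, which combined with the previous inequality yields $\frac{e_{G'}(A,B)}{n^{3/2}}\ge\frac{e_H(A,B)}{n^2}-3\delta$. The reason I would route this comparison through $H$ instead of directly estimating the number of deleted ordered pairs $e_G(A,B)-e_{G'}(A,B)=e_G(A\cap T,B)+e_G(A\cap S,B\cap T)$ via discrepancy on $G$ is that a deleted pair can be charged to either of its two endpoints having high degree, so a direct count loses a factor of roughly $2$ and would give only a bound near $5\delta$; on the $H$-side the two error terms are each at most $\abs T\cdot n\le\delta n^2$ simply because $H\le1$, with no such loss. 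This is the one genuine subtlety in the argument.
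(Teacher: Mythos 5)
Your proof is correct and follows essentially the same route as the paper: bound $\abs{V\setminus S}\le\delta n$ by applying the discrepancy hypothesis to $(V\setminus S,V)$, use $e_{G'}(A,B)=e_G(A\cap S,B\cap S)$, and absorb the difference $e_H(A,B)-e_H(A\cap S,B\cap S)\le 2\delta n^2$ on the $H$-side. The only cosmetic difference is that you treat the two one-sided inequalities separately while the paper handles both at once via the triangle inequality.
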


\begin{proof}
	Write $\ol S = V \setminus S$. Applying \cref{eq:trim-disc} to $(A,B) = (\ol S, V)$, we have
	\[
	\delta n^2 \geq \sqrt{n} e_G(\ol S, V) - e_H(\ol S, V) \geq \sqrt{n} \cdot 2\sqrt{n}\abss{\ol S} -\abss{\ol S}|V| = n\abss{\ol S},
	\]
	so $\abss{\ol S} \le \delta n$. 
	For any $A,B \subseteq V$, writing $A' = A \cap S$ and $B'= B \cap S$, 
	we have $e_{G'}(A,B) =  e_{G}(A',B')$, so
	\begin{align*}
	\abs{\sqrt{n} e_{G'}(A,B) - e_H(A,B)}
	&= \abs{\sqrt{n} e_{G}(A',B') - e_H(A',B') + e_H(A',B') - e_H(A, B)}
	\\
	&\le \abs{\sqrt{n} e_{G}(A',B') - e_H(A',B')} + (\abss{A \setminus A'} + \abss{B \setminus B'}) n 
	\\
	&\le \delta n^2 + 2\abss{\ol S} n
	\le 3\delta n^2. \qedhere
	\end{align*}
\end{proof}

\section{Notation and setup} \label{sec:notation}

Given a graph $F$, a \emph{vertex weight function} on $F$ (sometimes we say ``on $V(F)$'', as graphs and their vertex sets are interchangeable for this purpose) is a collection $\balpha = (\alpha_v)_{v \in V(F)}$ of functions $\alpha_v \colon V \to [0,1]$ indexed by $v$. It will be important for our arguments that each $\alpha_v$ takes values in $[0,1]$ and not in some wider range.

Let $x = (x_v)_{v \in V(F)} \in V^{V(F)}$ with $x_v \in V$.
For each $S \subseteq V(F)$, we write $x_S = (x_v)_{v \in S}$ for its projection onto the  coordinates indexed by $S$.
To avoid notational clutter, we will sometimes write a subgraph as the subscript rather than its vertex set. For example, if $F'$ is a subgraph of $F$ and $S \subseteq V(F)$, then we write $x_{F'} = x_{V(F')}$, $x_{F \setminus F'} = x_{V(F) \setminus V(F')}$, and $x_{F\setminus S} = x_{V(F)\setminus S}$.

Given a function $f \colon V^S \to \RR$, we write
\[
\int f(x_S) d x_S = \abs{V}^{-\abs{S}} \sum_{x_S \in V^S} f(x_S).
\]
Furthermore, given a vertex weight function $\balpha = (\alpha_v)_{v \in S}$ on $S$, we write
\[
\int f(x_S) d^\balpha x_S = \int f(x_S) \prod_{v \in S} \alpha_v(x_v) \, d x_S.
\]

Given a symmetric function $g \colon V \times V \to \RR$ and $x \in V^{V(F)}$, we define $g_F \colon V^{V(F)} \to \RR$ by
\[
g_F(x) = \prod_{uv \in F} g(x_u, x_v).
\]
Given $S \subseteq V(F)$ and a vertex weight function $\balpha$ on $F \setminus S$, we define $g_{F,S} \colon V^S \to \RR$ by
\[
g_{F,S}^\balpha(x_S) = \int g_F(x_F)\, d^\balpha x_{F \setminus S},
\]
which (up to normalization) corresponds to counting homomorphisms $F\to G$ where the image of $S$ is $x_S$ and the remaining vertices of $F$ are weighted by $\balpha$.
Such quantities also arise naturally when using flag algebras. Finally, given a vertex weight function $\balpha$ on $F$, we write
\[
t^\balpha(F, g) = g_{F, \emptyset}^\balpha = \int \prod_{uv \in F} g(x_u, x_v) \prod_{v \in V(F)} \paren{\alpha_v(x_v) dx_v},
\]
which is the $\balpha$-weighted homomorphism density of $F$ in $g$.

It will also be convenient to allow our weight function notation to be a little more flexible, in the sense that we automatically ignore uninvolved vertices. For example, if $\balpha$ is a vertex weight function on $F$ and $F'$ is a subgraph on a proper vertex subset, then we still write $t^{\balpha}(F', g)$ and $d^\balpha x_{F'}$ with the understanding that $\balpha$ is now restricted to the vertex set of $F'$. This way we do not always have to specify the set of vertices that the weight function is defined on.

\medskip

Both the discrepancy condition \cref{eq:disc} and the counting lemma conclusion \cref{eq:lower-count} can be equivalently rephrased in terms of weight functions $\balpha$ 
rather than product sets $\bA$. 
The extra flexibility allowed by considering $[0,1]$-valued weight functions will be helpful in our proofs.
To see the equivalence, note that, with the function $g = \sqrt{n} G$ (here we view $G \colon V \times V \to \{0,1\}$ as the edge-indicator function of the graph $G$), we have
\[
\frac{\hom_\bA(F, G)}{n^{\abs{V(F)} - \abs{E(F)}/2}} = t^\balpha (F, g)
\]
for the vertex weight function $\balpha$ on $F$ which is equal to the indicator function of $\bA$ (i.e., $\alpha_v(x) = 1$ if $x \in A_v$ and $0$ otherwise). Likewise, for $h = H$, 
\[
\frac{\hom_\bA(F, H)}{n^{\abs{V(F)}}} = t^\balpha(F, h).
\]
Hence, the counting lemma conclusion  \cref{eq:lower-count}, that
\[
	\frac{\hom_\bA(F, G)}{n^{\abs{V(F)} - \abs{E(F)}/2}}
	\ge	\frac{\hom_\bA(F, H)}{n^{\abs{V(F)}}} - \epsilon,
\]
is equivalent to the statement that
\begin{equation}
	\label{eq:t-counting}
	t^\balpha (F, g) \ge t^\balpha (F, h) - \epsilon
\end{equation}
for any $\{0,1\}$-valued vertex weight function $\balpha$.
Since $t^\balpha (F, g) -  t^\balpha (F, h)$ is a multilinear function of the values $(\alpha_v(x))_{v \in F, x \in V}$, 
the extrema of the function are attained when $\alpha_v(x) \in \{0,1\}$ for all $v \in F$ and $x \in V$.
This shows that the counting lemma conclusion \cref{eq:lower-count} is equivalent to the statement that \cref{eq:t-counting} holds for all vertex weight functions.

By the same argument, the discrepancy condition~\cref{eq:disc}, that 
\[
\abs{\frac{e_G(A,B)}{n^{3/2}} - \frac{e_H(A,B)}{n^2}} \le \delta \qquad \text{for all } A,B\subseteq V,
\]
is equivalent to
\[
\abs{\int (g-h)(x,y) \alpha_1(x)\alpha_2(y) dxdy} \le \delta \qquad \text{for all }\alpha_1,\alpha_2 \colon V \to [0,1].
\]
In fact, (thanks to the trimming step in the previous section) from now on we will only need the one-sided discrepancy hypothesis
\begin{equation}\label{eq:disc-lower}
\int g(x,y) \alpha_1(x)\alpha_2(y) dxdy
 \ge 
 \int h(x,y) \alpha_1(x)\alpha_2(y) dxdy - \delta \qquad \text{for all }\alpha_1,\alpha_2 \colon V \to [0,1].	
\end{equation}

\emph{Summary of what needs to be proved.}
To prove that $F$ is countable, it suffices to show that there is a constant $c > 0$ such that for every $\epsilon > 0$ there exists  $\delta > 0$ satisfying the following. 
Let $G$ be an $n$-vertex $C_4$-free graph on vertex set $V$ with maximum degree at most $2\sqrt{n}$. 
Let $g = c\sqrt{n} G$ and 
let $h \colon V \times V \to [0,1]$ be a symmetric function satisfying \cref{eq:disc-lower}. 
Then, for every vertex weight function $\balpha$ on $F$, one has \cref{eq:t-counting}.

The reason that we scale by a factor of $c$ in defining $g$ is so that the various tameness hypotheses on subgraphs of $G$ can be made to have the form $t(F', g) \le 1$. Furthermore, as long as $c \le 1/2$, the hypothesis that $G$ has maximum degree at most $2\sqrt{n}$ implies that 
\begin{equation} \label{eq:g-max-degree}
	\int g(x,y) \, dy \le 1 \qquad \text{ for all } x \in V.		
\end{equation}

\section{Counting lemma proofs} \label{sec:proof}

We follow without further comment the framework discussed in the previous section.

\begin{proof}[Proof of \cref{prop:pendant-edge-countable} (adding a pendant edge preserves countability)]
	Let $F$ be a graph with a leaf vertex $u$. Let $F'$ be $F$ with $u$ removed and
	assume that $F'$ is countable. Suppose that
	\begin{equation}
		\label{eq:leaf-disc}
	\int g(x,y) \alpha_1(x)\alpha_2(y) dxdy
	 \ge 
 \int h(x,y) \alpha_1(x)\alpha_2(y) dxdy - \epsilon
	\end{equation}
 for all $\alpha_1,\alpha_2 \colon V \to [0,1]$.	
	Since $F'$ is countable, we may also assume that
	\begin{equation} \label{eq:F'-hyp}
	t^{\balpha'}(F',g) \ge t^{\balpha'}(F',h) - \epsilon		
	\end{equation}
	for every vertex weight function $\balpha'$ on $F'$. 
	
	It suffices to show that these two inequalities imply that
	\begin{equation} \label{eq:pendant-edge-goal}		
	t^{\balpha}(F,g) \ge t^{\balpha}(F,h) - 2\epsilon
	\end{equation}
	for every vertex weight function $\balpha$ on $F$.
	For this, define a vertex weight function $\balpha'$ on $F'$ by $\alpha'_v = \alpha_v$ unless $v$ is the neighbor $v$ of $u$, in which case $\alpha'_{v}(x_v) = \alpha_{v}(x_v)\int g(x_v,x_u)\alpha_u(x_u)dx_u \in [0,1]$ by \cref{eq:g-max-degree}.
	Then,  
	by \cref{eq:F'-hyp} applied with this $\balpha'$,
	\[
	t^\balpha(F, g) = t^{\balpha'}(F',g) \ge t^{\balpha'}(F',h) - \epsilon.
	\]
	Furthermore, we have 
	\begin{align*}
	t^{\balpha'}(F',h) 
	&= \int h^\balpha_{F',v}(x_v) g(x_v,x_u) \alpha_u(x_u)\alpha_v(x_v) \, dx_u dx_v
	\\
	&
	\ge \int h^\balpha_{F',v}(x_v) h(x_v,x_u) \alpha_u(x_u)\alpha_v(x_v) \, dx_u dx_v
	 - \epsilon
	\\
	&
	= t^{\balpha}(F,h) - \epsilon,
	\end{align*}
	where the inequality step uses \cref{eq:leaf-disc}.
	Combining the last two displayed inequalities yields \cref{eq:pendant-edge-goal}, as desired.
\end{proof}

\begin{proof}[Proof of \cref{thm:islands-bridges} (islands and bridges)]
By the tameness assumptions, we can choose a sufficiently small constant $c \in (0,1]$ (depending only on $F$) such that, setting $g = c\sqrt{n} G \colon V \times V \to [0,\infty)$, we have 
\begin{equation}\label{eq:bounded}
t(F_i, g) \le 1 \text{ for all }  i \in [k-1] \quad \text{ and } \quad 
t(J_j \vee_{I_j} J_j, g) \le 1  \text{ for all }  j \in [\ell].
\end{equation}
Let $\epsilon \in (0,1]$ and let 
\begin{equation}
	\label{eq:eta}
\eta_i = \epsilon^{2^i} \text{ for each } i \in [\ell] 
\quad \text{ and } \quad 
\eta = \epsilon^{2^{\ell+1}}.
\end{equation}
By the countability assumption on $F_1, \dots, F_k, J_1, \dots, J_\ell$
it suffices to show that if $h \colon V \times V \to [0,1]$ satisfies
\begin{equation} \label{eq:recur-assumption}
t^\balpha (L, g)  \ge t^\balpha(L, h) - \eta
\end{equation}
for each $L \in \{F_1, \dots, F_k, J_1, \dots, J_\ell\}$ and vertex weight function $\balpha$ on $L$, then
\begin{equation}
	\label{eq:island-pf-goal}
t^\balpha(F, g) \ge t^\balpha(F, h) - (2\ell + k)\epsilon,
\end{equation}
for every vertex weight function $\balpha$ on $F$.

Write
\[
f_{\le t} (x) = \begin{cases}
	f(x) & \text{if } f(x) \le t, \\
	0 & \text{otherwise}
\end{cases}
\quad \text{and} \quad
f_{> t} (x) = \begin{cases}
	f(x) & \text{if } f(x) > t, \\
	0 & \text{otherwise.}
\end{cases}
\]

For each connector $(J, I) = (J_j,I_j)$ (temporarily dropping the subscript $j$ to avoid notational clutter),
writing
\[
g^\balpha_{J,I,>\delta^{-1}} = (g^\balpha_{J,I})_{>\delta^{-1}},
\]
we have, using $t(J \vee_I J, g) \le 1$ from \cref{eq:bounded}, that
\[
\int g^\balpha_{J,I,>\delta^{-1}}(x_I) \,d^\balpha x_I
\le \delta\, \int g_{J, I}^2 (x_I) \,d^\balpha x_I  
\leq \delta t(J \vee_I J, g) \le \delta.
\]
Thus, using \cref{eq:recur-assumption},
\begin{equation} \label{eq:connector-bounded-count}
\int g^\balpha_{J,I,\le \delta^{-1}}(x_I) \,d^\balpha x_I
\ge 
\paren{\int g^\balpha_{J,I} (x_I) \,d^\balpha x_I} - \delta
\ge 
\paren{\int h^\balpha_{J,I} (x_I) \,d^\balpha x_I} - \eta - \delta.
\end{equation}

\medskip
\emph{Step I. Swapping out the islands one at a time.}
\medskip

Write $F' = \cup_i F_i$ (islands without connectors).
We have
\begin{align*}
t^\balpha(F, g) 
&= \int g_F(x_F) \, d^\balpha x_F
\\
&= \int \prod_{i=1}^k g_{F_i}(x_{F_i}) \prod_{j=1}^\ell g_{J_j, I_j}^\balpha(x_{I_j}) \, d^\balpha x_{F'}
\\
&\ge \int \prod_{i=1}^k g_{F_i}(x_{F_i}) \prod_{j=1}^\ell g_{J_j, I_j, \le \eta_j^{-1}}^\balpha(x_{I_j}) \, d^\balpha x_{F'}
\\
&=
\int \paren{\int g_{F_k}(x_{F_k}) \prod_{j=1}^\ell g_{J_j, I_j, \le \eta_j^{-1}}^\balpha(x_{I_j}) d^\balpha x_{F_k}} \prod_{i=1}^{k-1} \paren{g_{F_i}(x_{F_i}) d^\balpha x_{F_i}}.
\end{align*}
Now, using \cref{eq:recur-assumption} for $F_k$ and noting that the inner integral inside the parenthesis has the form
$\int g_{F_k}(x_{F_k}) d^{\balpha'} x_{F_k} \cdot \prod_{j=1}^\ell \eta_j^{-1}$ for some other vertex weight function $\balpha'$ (absorbing the connector factors by using the fact that each connector uses at most one vertex from the island $F_k$), we have, continuing from above, that the last expression is
\[
\ge 
\int \paren{\int h_{F_k}(x_{F_k}) \prod_{j=1}^\ell g_{J_j, I_j, \le \eta_j^{-1}}^\balpha(x_{I_j}) d^\balpha x_{F_k} - \eta \prod_{j=1}^\ell \eta_j^{-1}} 
\prod_{i=1}^{k-1}  \paren{g_{F_i}(x_{F_i}) d^\balpha x_{F_i}}.
\]
Since $\eta \prod_{j=1}^\ell \eta_j^{-1} \le \epsilon$ by \cref{eq:eta} and $\int g_{F_i}(x_{F_i}) d^\balpha x_{F_i} \leq t(F_i, g) \le 1$ for each $i \in[k-1]$ by \cref{eq:bounded}, we can continue the above as
\[
\ge 
\int h_{F_k}(x_{F_k}) \prod_{i=1}^{k-1} g_{F_i}(x_{F_i}) \prod_{j=1}^\ell g_{J_j, I_j, \le \eta_j^{-1}}^\balpha(x_{I_j}) \, d^\balpha x_{F'} - \epsilon.
\]
We can now repeat this process to successively replace each remaining $g_{F_i}$ factor by $h_{F_i}$, losing at most an additive error of $\epsilon$ at each step.
(Note that even though we do not assume that $t(F_k, g) \le 1$, it is no longer needed, since what matters from now on is that $t(F_k, h) \le 1$ and this is automatically true for $h$, which takes values in $[0,1]$).
We may therefore continue the above as
\[
\ge \int \prod_{i=1}^k h_{F_i}(x_{F_i}) \prod_{j=1}^\ell g_{J_j, I_j, \le \eta_j^{-1}}^\balpha(x_{I_j}) \, d^\balpha x_{F'} - k\epsilon.
\]

\medskip
\emph{Step II. Swapping out the connectors one at a time.}
\medskip

Continuing, we have, applying \cref{eq:connector-bounded-count} to replace $g_{J_\ell, I_\ell, \le \eta_\ell^{-1}}^\balpha(x_{I_\ell})$ by $h_{J_\ell, I_\ell}^\balpha(x_{J_\ell})$ (here we are applying \cref{eq:connector-bounded-count} for each fixed $x_{F \setminus J_\ell}$ and with a different $\balpha$ which absorbs additional factors;
this step works only because each $J_\ell$ intersects each of $F_1, \dots, F_k$, $J_1, \dots, J_{\ell-1}$ in at most one vertex and all these intersections are contained in $I_\ell$), that the last expression above is
\[
\ge \int \prod_{i=1}^k h_{F_i}(x_{F_i}) \cdot 
h_{J_\ell, I_\ell}^\balpha(x_{I_\ell})
\prod_{j=1}^{\ell-1} g_{J_j, I_j, \le \eta_j^{-1}}^\balpha(x_{I_j}) \, d^\balpha x_{F'}
- (\eta + \eta_\ell)\prod_{j=1}^{\ell-1}\eta_j^{-1} - k\epsilon.
\]
We have $(\eta + \eta_\ell)\prod_{j=1}^{\ell-1}\eta_j^{-1} \le 2\epsilon$ by \cref{eq:eta}. 
Continuing, we can replace 
$g_{J_j, I_j, \le \eta_j^{-1}}^\balpha(x_{I_j})$ by $h_{J_j,I_j}^\balpha(x_{I_j})$ one at a time in \emph{decreasing order of $j$}, so that the additive error at $j$ is at most $(\eta + \eta_j)\eta_1^{-1} \cdots \eta_{j-1}^{-1} \le 2\epsilon$ (this is why we need $\eta_1, \dots, \eta_\ell$ to be rapidly decreasing).
Finally, we can continue the above as
\begin{align*}
&\ge \int \prod_{i=1}^k h_{F_i}(x_{F_i}) \prod_{j=1}^\ell h_{J_j, I_j}^\balpha(x_{I_j}) \, d^\balpha x_{F'} - (k+2\ell)\epsilon
\\ &= t^\balpha(F, h) - (k+2\ell)\epsilon,
\end{align*}
thereby proving \cref{eq:island-pf-goal}.
\end{proof}

\section{Concluding remarks}

We conclude by exploring some of the problems that arose from our study of countability.

\subsection*{Classifying countable graphs.} 
We have made partial progress on our \cref{que:main} by producing a family of graphs $F$ for which there is an $F$-counting lemma in $C_4$-free graphs. However, our results are likely far from a complete classification. We saw one necessary condition on any such $F$ in \cref{rem:nec}, namely, that $F$ should have girth at least $5$. It also seems necessary that the $2$-density of $F$ should be less than $2$, that is, that any subgraph $F’$ of $F$ should satisfy $|E(F’)| \leq 2|V(F’)| - 4$. In particular, this would imply that any $d$-regular countable graph has $d \leq 3$.

Though not a formal proof, the intuition here is that the number of copies of $F'$ in our $C_4$-free graph should not be smaller than the number of edges (otherwise, we can delete all copies of $F'$, and hence $F$, by removing an edge from each copy) and, for a random graph of the same density $n^{-1/2}$, the condition that the $2$-density be less than $2$ is necessary for this to hold. Most likely, the true conditions for countability are even more stringent than this argument suggests. Perhaps resolving the cases highlighted in \cref{open:dod-pet} would be a good starting point for further progress. 

We remark in passing that we expect any progress on \cref{que:main} to also impinge on the closely related question where we assume that there are $o(n^2)$ copies of $C_4$ in our $n$-vertex graph rather than none. Indeed, the arguments in~\cite{CFSZ} showing that $C_5$ is countable apply in this more general situation and the proofs here may also be adapted to this context. We suspect that the same will be true of any countable graph.

\subsection*{Variations on countability.}
There are several variants of our basic question which may be interesting. For instance, for which graphs $F$ is there a two-sided counting lemma in $C_4$-free graphs? Our results are fundamentally one-sided, so new ideas are probably necessary to make progress on this question. However, we do know that for $F$ to satisfy a two-sided counting lemma, it must, at the very least, be tame. As observed in \cref{ex:subdiv}, this already rules out two-sided counting for the family of subdivisions $K’_t$ with $t \geq 5$. 

Another natural variant is to ask which graphs $F$ have an $F$-counting lemma in $H$-free graphs when $H$ is a bipartite graph other than $C_4$? Our arguments apply just as well to $K_{2,t}$-free graphs as they do to $C_4$-free graphs, but further extensions are less obvious. We do expect our methods to extend to prove counting lemmas in $C_{2k}$-free graphs for any $k \geq 3$, but here the real difficulty passes back to the regularity side. Indeed, in order to apply a $C_{2k+1}$-counting lemma in $C_{2k}$-free graphs to prove a  corresponding removal lemma, we also need to show that any regular partition of a $C_{2k}$-free graph has few edges between irregular pairs. However, we do not at present know how to do this for any $k \geq 3$. As in~\cite{CFSZ}, resolving this issue would have several consequences. To give just one example, it would allow us to show that any $3$-uniform hypergraph with $n$ vertices and girth greater than $2k+1$ has $o(n^{1+1/k})$ edges, extending both the classic Ruza--Szemer\'edi theorem~\cite{RS78}, which is equivalent to the case $k = 1$, and a recent result of the authors~\cite[Corollary 1.10]{CFSZ} resolving the case $k = 2$.

\section*{Acknowledgments}

Part of this work was completed in the summer of 2019 while Yufei Zhao was generously hosted by FIM (the Institute for Mathematical Research) during a visit to Benny Sudakov at ETH Z\"urich.


\end{document}